\newtheorem{theorem}{Theorem}
\newtheorem{lemma}[theorem]{Lemma}
\newtheorem{corollary}[theorem]{Corollary}
\theoremstyle{remark}
\newtheorem{remark}{Remark}
\theoremstyle{definition}
\begin{document}

\title{Stable SIP Discontinuous Galerkin Approximations of the Hydrostatic Stokes Equations}
\author{F. Guill\'en Gonz\'alez, M.V. Redondo Neble, J.R. Rodr\'iguez Galv\'an}
\bibliographystyle{alpha}
\maketitle

\begin{abstract}
  We propose a Discontinuous Galerkin (DG) scheme for the numerical
  solution of the Hydrostatic Stokes equations in Oceanography. This
  new scheme is based on the introduction of the symmetric interior
  penalty (SIP) technique for the Hydrostatic Stokes mixed variational
  formulation. Recent research showed that stability of the mixed
  formulation of Primitive Equations requires LBB
  (Ladyzhenskaya--Babu\v{s}ka--Brezzi) inf-sup condition and an extra
  hydrostatic inf-sup restriction relating the pressure and the
  vertical velocity. This hydrostatic inf-sup condition invalidates
  usual Stokes continuous finite elements like Taylor-Hood $\P2/\P1$
  or bubble $\P{1_b}/\P1$. Here we consider $\P{k}/\P{k}$
  discontinuous finite elements and, using adequate LBB-like and
  hydrostatic discrete inf-sup conditions we can demonstrate stability
  of the SIP DG scheme in the natural energy norm for this
  problem. Finally, according numerical tests are provided.
\end{abstract}


\section{Introduction}  \label{intro}

In this work we delve into the stability of a discrete Discontinuous
Galerkin formulation for the Hydrostatic Stokes equations (or
Primitive Equations of the ocean), where a penalization of interior
jumps of velocity, based on the symmetric interior penalty (SIP)
technique~\cite{arnold_interior_1982}, is introduced. We show that
this new formulation allows writing the equations as a mixed
(Stokes-like) problem which satisfies the well-known LBB
(Ladyzhenskaya--Babu\v{s}ka--Brezzi) condition and also the
hydrostatic inf-sup restriction which has been observed in the
Hydrostatic Stokes
model~\cite{Azerad:00,Azerad:PhD:96,Azerad:1994,Guillen-RRGalvan:stokes:APNUM:16,Guillen-RRGalvan:hydrostatic:NUMAT:15,Guillen-RRGalvan:stabilized:SINUM:15}.
Thus, the Hydrostatic Stokes equations can be approximated in standard
Finite Element (FE) meshes without vertical integration (customary in
most ocean and atmosphere models).

The equations of geophysical fluid dynamics governing the motion of
the ocean and atmosphere are derived from the conservation laws from
physics.  In the case of large scale ocean (see
e.g. \cite{CushmanRoisin-Beckers:09}),
the resulting system is too complex and, from a practical point of
view, numerous simplifications are
introduced, including the ``small layer'' hypothesis:
\begin{equation*}
  \overline\varepsilon = \frac{\text{vertical scale}}{\text{horizontal scale}} \quad
  \text{ is very small,}
\end{equation*}
for example a few Kms over some thousand Kms, that is
$\overline\varepsilon\simeq 10^{-3},10^{-4}$.

Variables like temperature and salinity will not be considered, so
that constant density is assumed (although this work could be extended
to the general variable-density case in future works).  Thus we can
focus on the momentum law, leading to the Navier-Stokes equations. The
anisotropic domain, after a vertical scaling, is transformed into the
following isotropic or adimensional (independent of
$\overline\varepsilon$) domain
$$
\Omega = \bigl\{ (\xx,z)\in \Rset^{3} \ /\ \xx=(x,y)\in\surface,\
-D(\xx)< z < 0 \bigr\},
$$
where $S\subset\Rset^2$ is the surface domain and $D=D(\xx)$ is the bottom function.
Here,  the rigid lid hypothesis has been
assumed (no vertical displacements of the free surface of the
ocean). We decompose the boundary into three parts: the
surface, $\surfaceBoundary =\overline\surface\times\{0\}$, the
bottom,
$\bottomBoundary=\{(\xx,-D(\xx)) \ /\ \xx=(x,y)\in\surface \}$, and
the talus or lateral walls,
$\talusBoundary=\{(\xx,z) \ /\ \xx\in\partial\surface,
-D(\xx)<z<0\}$.

Finally, a $\overline\varepsilon$-dependent scaling of vertical velocity is introduced
(see~\cite{Azerad-Guillen:01}), leading to the following equations
in the time-space domain $(0,T)\times\Omega$
(called \emph{Anisotropic} or \emph{Quasi-Hydrostatic
  Navier-Stokes Equations} and, for the limit case $\overline\varepsilon=0$,
\emph{Hydrostatic Navier-Stokes} or \emph{Primitive Equations}) where we denote
$\varepsilon = \nu\overline\varepsilon^2$:
\begin{align}
  \dt\uu + (\uu\cdot\gradx)\uu +\vv\dz \uu
  - \nu\Delta\uu + \gradx\pp &= \ff,
                               \label{eq:NS.a}
  \\
  \varepsilon\big\{\dt\vv + (\uu\cdot\gradx)\vv +\vv\dz \vv
  - \nu\Delta\vv \big\} + \dz\pp &= - g,
                                   \label{eq:NS.b}
  \\
  \divx\uu +  \dz\vv &= 0.
                       \label{eq:NS.c}
\end{align}
Here $\gradx=(\dx,\dy)^T$, $\divx\uu=\dx u_1+\dy u_2$ and
$\nu$ is the (adimensional kinematic) viscosity. The unknowns are
the 3D velocity field,
$(\uu, \vv):\Omega \times (0,T) \rightarrow \Rset^3$ and the
pressure, $\pp:\Omega \times (0,T) \rightarrow \Rset$. The term
$\ff=(f_1,f_2)^T$ models a given horizontal force while $g$ involves
the force due to gravity, which can be written in a potential form and
incorporated to the pressure term, hence it can be assumed $g=0$ in
(\ref{eq:NS.b}).  Other phenomena like the effects due to
the Coriolis acceleration are not considered  because
they are linear terms not affecting to the results presented in this work.
The system is endowed with initial values for the velocity field,
$(\uu,\vv)|_{t=0} = (\uu_0,\vv_0)$ and adequate boundary conditions,
for instance:
\begin{align}
  \nu\dz\uu|_{\surfaceBoundary} = \gs,
  \quad
  \vv|_{\surfaceBoundary} &=0,
                            \label{eq:bc.1}
  \\
  \uu|_{\bottomBoundary\cup\talusBoundary}=0 , \quad
  \vv|_{\bottomBoundary} &=0,
                           \label{eq:bc.2}
  \\
  \varepsilon\gradx\vv \cdot \mathbf{n_x}|_\talusBoundary &=0,
                                                            \label{eq:bc.3}
\end{align}
where $\gs$ represents the wind stress, $\mathbf{n_x}$ is the
horizontal part of the normal vector.

The limit of the Hydrostatic Equations
(\ref{eq:NS.a})--(\ref{eq:NS.c}) when $\varepsilon\to 0$ is studied on
rigorous mathematical grounds in \cite{Besson-Laydi:92} (for the
stationary case) and \cite{Azerad-Guillen:01} (for the evolutive
case).  Most of existence and regularity results (see
e.g.~\cite{Chacon-Guillen:00,Chacon-RodriguezGomez:05,Cao-Titi:2007,CushmanRoisin-Beckers:09,Guillen-Redondo:ViscSplittingPE:2017})
for~(\ref{eq:NS.a})--(\ref{eq:NS.c}), and also the major part of the
associated numerical schemes (see
e.g.~\cite{Chacon-Guillen:00,Chacon-RodriguezGomez:05}) are based on
the introduction of an equivalent \emph{integral-differential
  problem}, by doing a vertical integration of the vertical momentum
equation (\ref{eq:NS.b}). From the numerical point of view, this idea
has advantages (it is only necessary to compute a $2D$ pressure, only
defined in the surface $S$) but also some drawbacks (for instance,
standard FE in unstructured meshes, variable density and
non-hydrostatic cases are difficult to handle).

\renewcommand{\vv}{\upsilon}
\renewcommand{\bv}{\overline\upsilon}

In this work we are concerned on the linear steady model related
to~(\ref{eq:NS.a})--(\ref{eq:NS.c}) and in the less favorable limit
case, $\varepsilon=0$.
Results shall be extended to the non-hydrostatic
case $\varepsilon>0$ in further works.
The case $\varepsilon=0$ is known as Hydrostatic Stokes equations and
its mixed variational formulation reads: find
$(\uu,\vv,\pp)\in\UU\times\V\times\PP$ such that
\begin{alignat}{2}
  \nu (\grad\uu,\grad\buu) - (\pp, \divx\buu) &= (\ff,\buu) + (\gs,\buu)_{\Gamma_s}    &\quad \forall\,\buu\in\UU,
  \label{eq:HS.weak.a}
  \\
  (\pp,\dz\bv) & = 0 &  \quad \forall\,\bv\in\V,
  \label{eq:HS.weak.b}
  \\
  (\div(\uu,\vv),\bp) &= 0 &\quad \forall\,\bp\in\PP.
  \label{eq:HS.weak.c}
\end{alignat}
Here $(\cdot,\cdot)$ is the $L^2(\Omega)$ scalar product,
$(\cdot,\cdot)_{\Gamma_s}$ is the $L^2(\Gamma_s)$ scalar product and we define
\begin{align*}
  \UU&=\Uspace=\Big\{\uu\in H^1(\Omega)^{2} {\ | \ }
       \uu|_{\bottomBoundary\cup\talusBoundary}=0\Big\},
  \\
  \V&=\Vspace =\Big\{\vv\in L^2(\Omega){\ | \ } \dz\vv\in L^2(\Omega),\
       \vv|_{\surfaceBoundary\cup\bottomBoundary}=0\Big\},
  \\
  \PP&=\Pspace=\Big\{\pp\in L^2(\Omega){\ | \ } \int_\Omega\pp=0
       \Big\}.
\end{align*}
The space $\UU$ is endowed with the norm $\|\grad\uu\|$ (hereafter $\|\cdot\|$
denotes the $L^2(\Omega)$-norm) while in $\V$ we consider
$\|\dz\vv\|$, which is a norm owing to the homogeneous Dirichlet
condition  $v|_{\Gamma_s\cup\Gamma_b}=0$ and a vertical Poincar\'e
inequality.

As stated
in~\cite{Guillen-RRGalvan:hydrostatic:NUMAT:15,Azerad:1994}, well-posedness
of~(\ref{eq:HS.weak.a})--(\ref{eq:HS.weak.c})
hinges on the following inf-sup conditions:
\begin{align}
  \tag*{\ensuremath{(IS)^\PP}}
  \
  \ConstISp &\|\pp\| \le
  \sup_{0\neq(\uu,\vv)\in \UU \times \V}
  \frac{(\div(\uu,\vv),\pp)}{\|(\grad \uu,\, \dz \vv)\|}
  \qquad
  \forall \,\pp \in \PP,
  \label{eq:ISp}
  \\\noalign{\smallskip}
  \tag*{\ensuremath{(IS)^\V}}
  \
  &\|\dz\vv\| \le
  \sup_{0\neq \pp \in \PP}
  \frac{(\dz \vv,\pp)}{\|\pp\|}
  \qquad
  \forall\, \vv\in \V,
  \label{eq:ISv}
\end{align}
where $\|(\grad \uu,\, \dz \vv)\|$ is the norm of $\UU\times\V$. Note
that~\ref{eq:ISp} is basically the well-known LBB condition
while~\ref{eq:ISv} is a new \textit{hydrostatic} restriction.



In the discrete setting, it was shown in
\cite{Guillen-RRGalvan:hydrostatic:NUMAT:15} (see
also~\cite{Azerad:1994}) that the discrete counterpart of inf-sup
condition~\ref{eq:ISp} is no longer sufficient for
stability of standard conforming FE approximations
of~(\ref{eq:HS.weak.a})--(\ref{eq:HS.weak.c}), because it is also
necessary to choose FE spaces satisfying the discrete counterpart
of~\ref{eq:ISv}. Unfortunately, standard Stokes FE like Taylor-Hood
$\P2$--$\P1$ or ($\P1$+bubble)--$\P1$ do not satisfy~\ref{eq:ISv}.
Thus different FE must be considered (for instance, by approximation
of vertical velocity in a space other than horizontal velocity,
see~\cite{Guillen-RRGalvan:hydrostatic:NUMAT:15,Guillen-RRGalvan:stokes:APNUM:16}).

A different idea was introduced
in~\cite{Guillen-RRGalvan:stabilized:SINUM:15}, where
discrete~\ref{eq:ISv} is avoided by adding a consistent stabilizing
term to the vertical momentum equation (\ref{eq:HS.weak.b}). In this
way, the stability for Stokes-LBB FE combinations is shown and error
estimates are provided for Taylor-Hood $\P2$--$\P1$ FE and
mini-element ($\P1$bubble)--$\P1$ approximations, showing optimal
convergence order in the $\P2$--$\P1$ case.

The current work introduces a third approach that, until now, has not been
explored: using Discontinuous Galerkin (DG) methods, we can define
approximations that, without any stabilization, satisfy (in some
sense) both~\ref{eq:ISv} and~\ref{eq:ISp} restrictions.

  DG methods, which are well suited for the construction of stable
  discretizations of compressible (advection-dominated) flows and in
  general for hyperbolic operators, have been extended also for
  incompressible flows~(for a review, see
  e.g.~\cite{CockburnKarniadakisShu:2011:DGM:2408658,arnold_cockburn_unified_2002,di_pietro_ern_2012}
  and references therein) and in general for elliptic operators.  More
  in detail, DG methods for second order elliptic operators can be
  split roughly into two groups: first, the so called Local
  Discontinuous Galerkin (LDG) schemes, where the operator is
  converted into a system of first order equations and numerical
  fluxes are devised as in hyperbolic
  equations~\cite{arnold_cockburn_unified_2002}. On the other hand the
  schemes augmenting the elliptic operator by penalizing the
  discontinuities of the shape
  functions~\cite{glowinski_interior:douglas_dupont:1976,arnold_interior_1982}. These
  latter schemes are known as Interior Penalty (IP) DG method (SIP DG
  methods in the usual symmetric case).

In the same way, DG schemes for compressible (and for incompressible)
flows can be split into two groups: some of them are based in the LDG
schemes~\cite{cockburn_local_2002} while other discretizations are
based on the IP
method~\cite{hansbo_discontinuous_2002,di_pietro_ern_2012}. The scheme
presented here is based in the latter methods and our main
contribution is in the design of a SIP DG scheme where, somehow, the
Hydrostatic restriction~\ref{eq:ISv} is verified, in addition to the
LBB-like restriction~\ref{eq:ISp}.

This paper is structured as follows: in
Section~\ref{sect:ip-dg-formulation} we fix notation an introduce some
useful results from SIP DG approximation of diffusion equations. In
section \ref{sec:dg-discr-hydr} we introduce a SIP DG approximation
for~(\ref{eq:HS.weak.a})--(\ref{eq:HS.weak.c}) where the velocity
field and the pressure unknowns are defined by the same $k$--degree
discontinuous $\P{k}$ polynomials. In
Section~\ref{sect:well-posedness} we show well-posedness for this
approximation of the Hydrostatic Equations and in
Section~\ref{sect:numerical-tests} some numerical tests are shown
which agree with the theory.

\section{SIP DG Approximation of Diffusion Equations}
\label{sect:ip-dg-formulation}
We start fixing notations and
collecting some results which shall be useful in following sections.
Let us denote by $\Th$ a family of meshes of
the domain $\Omega\subset\Rset^d$ ($d=2$ or $3$ in practice) into
non-degenerate disjoint simplicial elements $K$ satisfying usual
regularity assumptions\cite{Ciarlet:78}:
$$
\exists \rho>0 \ /\ \rho\, h_K \le r_K \quad \forall K\in\Th,
$$
where $h_K$ is the diameter of $K$ and $r_K$ is the radius of the
largest ball inscribed in $K$. The number of edges (faces) of the
elements is denoted as $N_\partial$.  Note that more general meshes
can also be handled, specifically elements $K$ are not required to be
simplicial elements and $\Th$ can be any shape and contact-regular
mesh, see e.g.~\cite{di_pietro_ern_2012}, Section 1.4.

We associate to each triangulation $\Th$ the set of interior faces
(edges in $2D$) $\Ehi$ and the set of boundary faces $\Ehb$, defined
as follows: $e\in\Ehi$ if there are two polyhedra $K^+$ an $K^-\in\Th$
such that $e=K^+\cap K^-$ and $e\in\Ehb$ if there is $K\in\Th$ such
that $e=\partial K \cap \partial\Omega$. We define $\Eh=\Ehi\cup\Ehb$.

Let $u$ be a scalar-valued function on $\Omega$ and assume that $u$ is
smooth enough to admit on all $e\in\Ehi$ a (possibly two-valued)
trace. We define the jump and the average of $v$ on $e\in\Ehi$, denoted respectively
as $\jump{u}$ and $\average{u}$,  as follows: if $e=K^+\cap K^-$, then
\begin{equation*}
  \jump{u}_e= u|_{K^+}-u|_{K^-} \quad \hbox{and} \quad
  \average{u}_e = \frac12\Big(u|_{K^+}+u|_{K^-}\Big).
\end{equation*}
If $e\in\Ehb$, we define $\jump{u}=\average{u}=u|_e$.

Let us define the following \textit{broken discrete} Sobolev space, for each $m\ge 0$,
\begin{equation*}
  H^m(\Th) = \big\{ u\in L^2(\Omega) \ |\ u\in H^m(K)\ \forall K\in\Th \big\},
\end{equation*}
the broken gradient operator $\gradh$ for each
$u\in H^1(\Th)$,
$$
(\gradh u)|_K=\grad(u|_K)\quad \forall\, K\in\Th
$$
and the following finite-dimensional subspace of $H^m(\Th)$, composed
of polynomials of degree no more than $k$ in each element:
$$
\Pd h^k:=
\left\{
  u\in L^2(\Omega) \ | \ u\in \Polin{k}(K), \ \forall K\in\Th \right \}.
$$
The following discrete trace
inequality shall be useful: for all $\uh\in \Pd h^k$ and $K\in\Th$,
\begin{align}
  \label{eq:trace.boundary.element}
  h_K^{1/2} \norm[L^2(\partial K)]{\uh|_K} \le \Ctrace \, \norm[L^2(K)]{\uh},
\end{align}
where $\Ctrace$ is a constant
independent of $h$ and $K$ (and depending on $k$, $d$ and $\rho$).
See e.g.~\cite{di_pietro_ern_2012}, Lemma 1.46 and Remark
1.47, for details.
From this inequality, one has the following technical result.
\begin{lemma}
  \label{lemma.technical.inequalities}
  For every $\ph\in \Pd h^k$, the following inequalities are satisfied, for
  constant $C>0$ independent of $h$ (and dependent on $k$, $d$
  and $\rho$):
  \begin{align}
    \label{eq:technical.average.bound}
    &\Big( \sum_{e\in \Eh} h_e \int_e \average{\ph}^2 \Big )^{1/2} \le C \norm[L^2(\Omega)]{\ph},
    \\
    \label{eq:technical.jump.bound}
    &\Big( \sum_{e\in \Eh} h_e \int_e \jump{\ph}^2 \Big )^{1/2} \le C \norm[L^2(\Omega)] \ph.
  \end{align}
\end{lemma}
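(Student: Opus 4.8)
The plan is to reduce both inequalities to the discrete trace inequality~\eqref{eq:trace.boundary.element} by a face-to-element reorganization. The two estimates share the same structure, so I would treat them in parallel, distinguishing only a numerical constant at the very end.

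First I would bound the integrand pointwise on each face. On an interior face $e = K^+ \cap K^-$ the elementary inequality $(a\pm b)^2 \le 2(a^2 + b^2)$ yields $\average{\ph}^2 \le \frac12\big(\ph|_{K^+}^2 + \ph|_{K^-}^2\big)$ and $\jump{\ph}^2 \le 2\big(\ph|_{K^+}^2 + \ph|_{K^-}^2\big)$, while on a boundary face $e\in\Ehb$ only the single adjacent trace appears. Integrating over $e$ and multiplying by $h_e$ thus reduces every face contribution to a weighted sum of one-sided boundary integrals of the form $h_e \int_e \ph|_K^2$.

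Next I would invoke mesh regularity in the form $h_e \le h_K$ (the diameter of a face cannot exceed that of an adjacent element) to replace each $h_e$ by the corresponding $h_K$, and then regroup the sum over faces $e\in\Eh$ as a sum over elements $K\in\Th$. Since each element enters through all of its faces, and the number of faces per element is bounded by $N_\partial$, summing the one-sided contributions over the faces of a fixed $K$ produces exactly $h_K \norm[L^2(\partial K)]{\ph|_K}^2$.

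Finally I would apply the discrete trace inequality~\eqref{eq:trace.boundary.element} elementwise, bounding $h_K \norm[L^2(\partial K)]{\ph|_K}^2$ by $\Ctrace^2 \norm[L^2(K)]{\ph}^2$, and sum over $K\in\Th$ to recover $\Ctrace^2 \norm[L^2(\Omega)]{\ph}^2$. Taking square roots yields both inequalities, with $C$ proportional to $\Ctrace$ (the factor being $1/\sqrt2$ for the average and $\sqrt2$ for the jump, coming from the pointwise step). I do not anticipate a genuine obstacle; the only point demanding care is the bookkeeping of the face-to-element reorganization, namely ensuring that each interior face is accounted for from both adjacent elements and that the local trace inequality is invoked on the correct element.
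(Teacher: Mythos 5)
Your proof is correct and follows essentially the same route as the paper's own: the same pointwise bounds $\jump{\ph}^2 \le 2(p_+^2+p_-^2)$ and $\average{\ph}^2 \le \tfrac12(p_+^2+p_-^2)$ on interior faces, the same face-to-element regrouping using $h_e\le h_K$ and the bounded number of faces per element, and the same elementwise application of the discrete trace inequality~\eqref{eq:trace.boundary.element}. The only cosmetic slip is the claimed factor $1/\sqrt2$ for the average bound: on boundary faces $\average{\ph}^2=\ph|_e^2$ carries no factor $\tfrac12$, so that constant is really $\Ctrace$ rather than $\Ctrace/\sqrt2$ --- immaterial, since the lemma only asserts some $h$-independent $C$.
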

\begin{proof}
  First we observe that, for all $e\in \Ehi$ with
  $e= \partial K_1 \cap \partial K_2$, one has
  $\jump{\ph}^2 \le 2(p_1^2 +p_2^2)$ and
  $  \average{\ph}^2 \le (p_1^2 + p_2 ^2)/2 $, where
  $p_i = \ph|_{K_i}$, $i\in \{1,2 \}$.
  Also if $e\in \Ehb = \Eh \cap \partial \Omega$, by definition,
  $\jump{\ph}^2=p_1^2 $ and $\average{\ph}^2=p_1^2$.
  In any case, we can write
  \begin{equation*}
    \jump{\ph}^2 \le C_* ( p_1^2 +p_2^2)
    \quad \text{and} \quad \average{\ph}^2 \le  C_* (p_1^2 + p_2^2),
  \end{equation*}
  for some constant $C_*>0$. Therefore
  \begin{align*}
    \sum_{e\in \Eh} h_e \int_e \average{\ph}^2
    &\le C_* \sum_{e\in \Eh} h_e \int_e (p_1^2+p_2^2)
    \\
    &\le
    2 C_* \sum_{K\in \Th} h_K \sum_{e\in \Eh \cap \partial K} \int_e \ph^2
     \le
      2N_\partial C_* \sum_{K\in \Th}  h_K \int_{\partial K} \ph^2
      \le C \norm{\ph}^2,
    \end{align*}
    where $C=2N_\partial C_* C_{tr}$ and $C_{tr}$ is the constant
    introduced in~(\ref{eq:trace.boundary.element}). Inequality
    (\ref{eq:technical.jump.bound}) can be shown similarly.
\end{proof}

Let us consider the following symmetric interior
penalty (SIP) bilinear form  for discontinuous FE
approximation of second order elliptic and parabolic equations:
\begin{equation}
\label{eq:asip}
  \begin{split}
    \asip[sip,\eta](u,\bu) = \int_\Omega \gradh u \cdot\gradh\bu -
    \sum_{e\in\Eh} \int_e\big( \average{\gradh u}\cdot \nn_e \jump{\bu}
    + \jump{u} \average{\gradh\bu}\cdot \nn_e \big) \\
    +
    \eta\sum_{e\in\Eh} \frac1{h_e} \int_e \jump{u}\jump{\bu},
  \end{split}
\end{equation}
for each $u,\bu\in \Pd h^k$. Here $\nn_e=(\nn_\xx,n_z)\in \Rset^d$
denotes the normal vector (in a fixed chosen sense) across the edge or
face $e$, $h_e$ is the diameter of $e$ and $\eta > 0$ is a constant.
The second term at RHS of (\ref{eq:asip}) arises for consistency and
symmetry, while the last one introduces a penalization on interior
faces and boundary faces which enforces coercivity. Also boundary
values are penalized which, assuming Dirichlet boundary conditions, is
used impose weakly these conditions.
Indeed, one has the following
coercivity result in $\Pd h^k$ (see e.g~\cite{di_pietro_ern_2012},
Lemma 4.12) for the norm
\begin{equation*}
  \normsip{u} = \big(\, \norm{\gradh u}^2 + \seminormU{u}^2 \;\big) ^{1/2},
  \quad
  \text{where} \quad \seminormU{u} =
  \Big (\sum_{e\in\Eh} \frac1{h_e} \int_e {\jump u}^2 \Big ) ^{1/2}.
\end{equation*}
\begin{lemma}[Coercivity for $\asip(\cdot,\cdot)$] \label{le:asip.coercivity}
  Let us denote $\eta_*=\Ctrace^2$, with $\Ctrace$ given in \eqref{eq:trace.boundary.element}. For all $\eta>\eta_*$, one has
  \begin{equation}
    \label{eq:asip.coercivity}
    \asip[sip,\eta](\uh,\uh) \ge C(\eta) \normsip{\uh}^2, \quad \forall\,\uh\in \Pd h^k,
  \end{equation}
  where $C(\eta)= (\eta-\eta_*)/(1+\eta)$.
  \label{lemma:sip:coercivity}
\end{lemma}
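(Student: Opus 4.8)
The plan is to test the bilinear form on the diagonal, $u=\bu=\uh$, and to recognise the three resulting contributions: the broken Dirichlet energy $\norm{\gradh\uh}^2$, the penalty term $\eta\,\seminormU{\uh}^2$, and the \emph{consistency} cross term
\[
T(\uh) := 2\sum_{e\in\Eh}\int_e \average{\gradh\uh}\cdot\nn_e\,\jump{\uh},
\]
so that $\asip[sip,\eta](\uh,\uh)=\norm{\gradh\uh}^2 - T(\uh) + \eta\,\seminormU{\uh}^2$. The whole difficulty is to absorb $T(\uh)$ into a small fraction of $\norm{\gradh\uh}^2$ plus a controlled multiple of $\seminormU{\uh}^2$, so that a strictly positive amount of each of the two terms defining $\normsip{\uh}^2$ survives.

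First I would estimate $T(\uh)$ face by face. On each $e$, Cauchy--Schwarz together with $|\nn_e|=1$ gives $\int_e\average{\gradh\uh}\cdot\nn_e\jump{\uh}\le\big(\int_e\average{\gradh\uh}^2\big)^{1/2}\big(\int_e\jump{\uh}^2\big)^{1/2}$; inserting the balancing factors $h_e^{1/2}$ and $h_e^{-1/2}$ and applying Young's inequality with a free parameter $\alpha>0$ yields
\[
T(\uh) \le \frac{1}{\alpha}\sum_{e\in\Eh} h_e\int_e\average{\gradh\uh}^2 + \alpha\,\seminormU{\uh}^2.
\]
The key step is then to bound the first sum by $\eta_*\norm{\gradh\uh}^2$, which is exactly where the threshold $\eta_*=\Ctrace^2$ is born. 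I would reorganise the sum over faces into a sum over elements, using on each interior face the elementary convexity bound of the square of an average by the half-sum of the squares of its two one-sided traces (and, on boundary faces, the identity between average and trace), the trivial inequality $h_e\le h_K$ for $e\subset\partial K$, and finally the discrete trace inequality~\eqref{eq:trace.boundary.element} applied to $\gradh\uh|_K$ over the whole of $\partial K$. Since~\eqref{eq:trace.boundary.element} already sums over all faces of $K$, and the factor $\tfrac12$ from the interior averages distributes one contribution to each adjacent element, the per-element contributions collapse to $\Ctrace^2\norm[L^2(K)]{\gradh\uh}^2$, and summing over $K\in\Th$ gives $\sum_{e\in\Eh}h_e\int_e\average{\gradh\uh}^2\le\eta_*\norm{\gradh\uh}^2$. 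I expect this bookkeeping --- landing on the constant exactly $\Ctrace^2$ rather than a wasteful $N_\partial\Ctrace^2$ --- to be the main obstacle; it hinges on the trace inequality being stated over $\partial K$ as a whole and on the averaging factor $\tfrac12$ on interior faces.

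Combining the two previous displays gives
\[
\asip[sip,\eta](\uh,\uh) \ge \Big(1-\frac{\eta_*}{\alpha}\Big)\norm{\gradh\uh}^2 + (\eta-\alpha)\,\seminormU{\uh}^2,
\]
and it remains only to choose $\alpha$ so that both coefficients are positive and at least $C(\eta)$. Taking $\alpha=\eta_*\,(1+\eta)/(1+\eta_*)$, which lies strictly between $\eta_*$ and $\eta$ precisely because $\eta>\eta_*$, makes the first coefficient equal to $(\eta-\eta_*)/(1+\eta)=C(\eta)$ and the second equal to $(\eta-\eta_*)/(1+\eta_*)\ge C(\eta)$. Hence $\asip[sip,\eta](\uh,\uh)\ge C(\eta)\big(\norm{\gradh\uh}^2+\seminormU{\uh}^2\big)=C(\eta)\normsip{\uh}^2$, which is the claim.
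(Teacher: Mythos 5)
Your proof is correct, and it is essentially the standard argument behind this lemma: the paper itself does not write out a proof (it quotes the result from Di Pietro--Ern, Lemma 4.12), and your route --- Cauchy--Schwarz on the consistency term, the key bound $\sum_{e\in\Eh} h_e\int_e|\average{\gradh\uh}|^2\le \Ctrace^2\,\norm{\gradh\uh}^2$ obtained by regrouping faces element by element (using the factor $\tfrac12$ on interior averages, $h_e\le h_K$, and the trace inequality \eqref{eq:trace.boundary.element} over the whole of $\partial K$, applied componentwise to $\gradh\uh|_K$, whose components again lie in $\Pd h^k$), and Young's inequality with an intermediate parameter --- is exactly the argument of that reference adapted to the whole-boundary form of the trace inequality used here. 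Your final choice $\alpha=\eta_*(1+\eta)/(1+\eta_*)$, which lies strictly between $\eta_*$ and $\eta$, does recover precisely the stated constant $C(\eta)=(\eta-\eta_*)/(1+\eta)$, so the proposal is a valid self-contained proof of the lemma.
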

One also has  boundedness on $\Pd h^k$ (see e.g. \cite{di_pietro_ern_2012}, Lemmas~4.16 and~4.20):
\begin{lemma}[Boundedness]
  There is $C_{\text{bnd}}>0$ independent of $h$ (and depending on $\eta$)
  such that
  \begin{equation*}
    \asip[sip,\eta](\uh,\buh) \le C_{\text{bnd}} \normsip{\uh}\normsip{\buh}, \quad \forall\,\uh,\buh\in \Pd h^k.
  \end{equation*}
  \label{lemma:sip:boundedness}
\end{lemma}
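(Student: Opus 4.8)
The plan is to split the bilinear form $\asip[sip,\eta](\uh,\buh)$ into its three constituent pieces --- the broken volume integral, the consistency/symmetry face term, and the penalty face term --- and to bound each one separately by a constant times $\normsip{\uh}\,\normsip{\buh}$. Two of the three are immediate and only the face term involving the gradient averages requires real work.

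First, the volume term is handled by the Cauchy--Schwarz inequality on $L^2(\Omega)$, giving $\int_\Omega \gradh\uh\cdot\gradh\buh \le \norm{\gradh\uh}\,\norm{\gradh\buh} \le \normsip{\uh}\,\normsip{\buh}$. The penalty term is controlled directly by the jump seminorm: writing $\frac1{h_e}\jump{\uh}\jump{\buh} = \big(h_e^{-1/2}\jump{\uh}\big)\big(h_e^{-1/2}\jump{\buh}\big)$ and applying a face-wise then summed Cauchy--Schwarz over $e\in\Eh$ yields $\eta\sum_{e\in\Eh}\frac1{h_e}\int_e \jump{\uh}\jump{\buh} \le \eta\,\seminormU{\uh}\,\seminormU{\buh} \le \eta\,\normsip{\uh}\,\normsip{\buh}$.

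The nontrivial part is the consistency/symmetry term, so I would concentrate on it. For each face $e$ I would insert the scaling weights $h_e^{1/2}$ and $h_e^{-1/2}$, writing $\average{\gradh\uh}\cdot\nn_e\,\jump{\buh} = \big(h_e^{1/2}\average{\gradh\uh}\cdot\nn_e\big)\big(h_e^{-1/2}\jump{\buh}\big)$. Using $|\nn_e|=1$ together with a face-wise and then summed Cauchy--Schwarz bounds $\sum_{e\in\Eh}\int_e\average{\gradh\uh}\cdot\nn_e\,\jump{\buh}$ by $\big(\sum_{e\in\Eh} h_e\int_e \average{\gradh\uh}^2\big)^{1/2}\seminormU{\buh}$. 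The first factor is exactly where Lemma~\ref{lemma.technical.inequalities} enters: since each component of $\gradh\uh$ is a polynomial of degree at most $k-1$ on every element and therefore lies in $\Pd h^k$, inequality~\eqref{eq:technical.average.bound} applies componentwise and gives $\big(\sum_{e\in\Eh} h_e\int_e \average{\gradh\uh}^2\big)^{1/2} \le C\,\norm{\gradh\uh}$. Hence this contribution is at most $C\,\norm{\gradh\uh}\,\seminormU{\buh}\le C\,\normsip{\uh}\,\normsip{\buh}$, and the symmetric term $\jump{\uh}\,\average{\gradh\buh}\cdot\nn_e$ is treated identically with the roles of $\uh$ and $\buh$ exchanged. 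Collecting the three estimates yields the claim with a constant $C_{\text{bnd}}$ depending on $\eta$ and on the constant $C$ of Lemma~\ref{lemma.technical.inequalities}.

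The main obstacle --- and the only place where care is needed --- is the correct $h_e$-weighting of the face term: the measure must be split so that the gradient average carries the factor $h_e^{1/2}$ (the scaling under which the technical inequality~\eqref{eq:technical.average.bound}, and ultimately the discrete trace inequality~\eqref{eq:trace.boundary.element}, is uniform in $h$) while the jump carries $h_e^{-1/2}$ (matching the very definition of $\seminormU{\cdot}$). Any other distribution of the weights would fail to reproduce an $h$-independent constant, so this is the step I expect to verify most carefully.
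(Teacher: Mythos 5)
Your proof is correct, but note that the paper does not actually prove this lemma at all: it delegates it to \cite{di_pietro_ern_2012} (Lemmas~4.16 and~4.20), where the result is obtained in two stages --- boundedness of $\asip[sip,\eta](\cdot,\cdot)$ with respect to an extended norm that augments $\normsip{\cdot}$ by the element-boundary gradient terms $\sum_{K\in\Th} h_K \norm[L^2(\partial K)]{\gradh u}^2$, followed by the observation that on the discrete space $\Pd h^k$ this extended norm is controlled by $\normsip{\cdot}$ thanks to the discrete trace inequality~\eqref{eq:trace.boundary.element}. Your argument compresses these two stages into a single self-contained computation using only tools already in the paper: the splitting into volume, consistency and penalty terms with the $h_e^{\pm 1/2}$ weighting is the standard mechanism, and your key step --- observing that each component of $\gradh\uh$ is a piecewise polynomial of degree at most $k-1$, hence belongs to $\Pd h^k$, so that inequality~\eqref{eq:technical.average.bound} of Lemma~\ref{lemma.technical.inequalities} yields $\big(\sum_{e\in\Eh} h_e \int_e |\average{\gradh\uh}|^2\big)^{1/2} \le C\,\norm{\gradh\uh}$ --- is exactly where the polynomial structure (equivalently, the trace inequality underlying Lemma~\ref{lemma.technical.inequalities}) enters and makes the constant uniform in $h$. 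What the paper's citation buys is generality: the two-stage version in the reference also bounds $\asip[sip,\eta](v,\buh)$ for nondiscrete $v$, which is what one needs for consistency and error estimates; what your proof buys is self-containedness within the paper's own toolkit, at no loss here since the lemma is claimed only on $\Pd h^k \times \Pd h^k$. Your closing remark also correctly identifies the one delicate point: the $h_e$-weights must be split so that the jump factor reproduces $\seminormU{\cdot}$ while the average factor matches the scaling under which~\eqref{eq:technical.average.bound} is $h$-uniform.
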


\section{SIP DG Discretization of the Hydrostatic Stokes Equations}
\label{sec:dg-discr-hydr}

In this section we introduce the discrete variational formulation for
the Hydrostatic problem~(\ref{eq:HS.weak.a})--(\ref{eq:HS.weak.c})
based on an SIP DG approximation. For simplicity, homogeneous
Dirichlet boundary conditions are considered (in particular we take
$\gs=0$) although  Neumann conditions can also be imposed in practice,
as outlined in Section~\ref{sect:numerical-tests}.

Here we introduce the same polynomial order for the velocity field
$\wwh=(\uuh,v_h)$ and the pressure $\ph$ spaces:
\begin{align*}
  \UUh &= (\Pd h^k)^{d-1}, \quad \Vh=\Pd h^k, \quad \WWh=\UUh\times\Vh=(\Pd h^k)^d,
  \\
  \Ph &= \Pd h^k.
\end{align*}
\renewcommand{\vv}{v}%
\renewcommand{\bv}{\overline v}%
For each $\wwh=(\uuh,\vh)$ and $\bwwh=(\buuh,\bvh) \in\WWh$, with
$\uuh=(u_i)_{i=1}^{d-1}$ and $\buuh=(\overline u_i)_{i=1}^{d-1}$, we define the following
bilinear form associated to~(\ref{eq:HS.weak.a})--(\ref{eq:HS.weak.c}):
\begin{equation}
  \label{eq:bilinear.a}
  a_h(\wwh,\bwwh) = \nu\Big( \sum_{i=1}^{d-1} \asip[sip , \eta](u_i,\overline u_i)
  + \eta\sum_{e\in\Eh} \frac1{h_e} \int_e \jump{\vv_h\nz}\jump{\bv_h\nz} \Big),
\end{equation}
where SIP bilinear form for vertical velocity is not introduced  (due
to the lack of diffusive terms in vertical momentum equations)
although a penalization term for $\vh$, in vertical direction, is present
in interior and boundary faces.

The next step consists in introducing a suitable norm on $\wwh=(\uuh,\vh) \in\WWh$ for
which a generalized coercivity result can be obtained. Note that
using Lemma~\ref{lemma:sip:coercivity} we have only
\begin{equation}
a_h(\wwh,\wwh) \ge \nu \big(C_\eta \normsip{\uuh}^2 + \eta\seminormV{\vh}^2  \big),
\label{eq:ah.partial.coercivity}
\end{equation}
with
$$\normsip{\uuh}^2=\sum_{i=1}^{d-1}\normsip{u_i}^2, \quad
\seminormV{\vh}^2 = \sum_{e\in\Eh} \frac1{h_e} \int_e \jump{\vh \nz}^2.
$$
If we define
the following ``isotropic'' velocity norm
$$
\norm[\mathrm{iso}]{\wwh} =
\left(\normsip{\uuh}^2+\normsip{\vh}^2\right)^{1/2},
$$
then no
control for $\normsip{\vh}^2$ can be obtained.
In order to avoid this obstacle, we introduce the
following anisotropic or hydrostatic velocity norm:
\begin{equation*}
  \normvel{\wwh} = \left(\normsip{\uuh}^2 + \norm{\dzh\vh}^2 + \seminormV{\vh}^2\right)^{1/2},
\end{equation*}
where $\dzh$ is the broken vertical derivative (which is defined
similarly to $\gradh$). 
Although inequality~(\ref{eq:ah.partial.coercivity}) does not allow to
infer the  coercivity of $a_h(\cdot,\cdot)$ for $\normvel{\cdot}$ (due to the lack of
control for $\dzh\vh$), one has the following inf-sup bound for
$\norm{\dzh\vh}$ in terms of $P_h$:
\begin{lemma} [Stability for $\dzh\vh$]
It holds
  \begin{equation}
    \label{eq:vh.stability}
     \norm{\dzh\vh} =
    \sup_{\bph \in \Ph}\frac{\int_{\Omega} \bph \, \partial_{z,h}\,\vh}{\|\bph\|}
     \quad \forall\vh\in\Vh.
  \end{equation}
  \label{lemma:vh.stability}
\end{lemma}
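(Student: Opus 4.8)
The plan is to prove the claimed equality by establishing the two inequalities $\le$ and $\ge$ separately, the whole argument resting on a single structural observation about the discrete spaces. The key point is that $\Vh$ and $\Ph$ are both equal to $\Pd h^k$, and that the broken vertical derivative lowers the polynomial degree: on each element $K\in\Th$ the restriction $\vh|_K$ is a polynomial of degree at most $k$, so $\dzh\vh|_K=\partial_z(\vh|_K)\in\Polin{k-1}(K)\subset\Polin{k}(K)$. Consequently $\dzh\vh\in\Pd h^k=\Ph$, i.e.\ the broken vertical derivative of any element of $\Vh$ is itself an admissible test pressure. This is really the heart of the statement; once it is in place the rest is routine.

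For the upper bound, the Cauchy--Schwarz inequality gives, for every $\bph\in\Ph$,
\begin{equation*}
  \int_\Omega \bph\,\dzh\vh \le \|\bph\|\,\norm{\dzh\vh},
\end{equation*}
so that the quotient $\int_\Omega\bph\,\dzh\vh/\|\bph\|$ is bounded by $\norm{\dzh\vh}$ for each admissible $\bph$, and hence so is the supremum. For the reverse inequality I would simply test with the admissible choice $\bph=\dzh\vh\in\Ph$ furnished by the observation above: the quotient then equals $\norm{\dzh\vh}^2/\norm{\dzh\vh}=\norm{\dzh\vh}$, showing the supremum is at least $\norm{\dzh\vh}$. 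Combining the two bounds yields the equality, and in fact shows the supremum is attained.

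I do not expect any genuine obstacle here; the only point requiring a word of care is the degenerate case $\dzh\vh=0$, in which the numerator vanishes identically, the supremum is $0$, and both sides of the identity are zero, so the convention $\sup_{0\neq\bph}$ in the quotient causes no difficulty. The contrast worth stressing in the write-up is that this clean ``inf-sup equals norm'' identity is available \emph{precisely} because velocity and pressure are discretized by equal-degree discontinuous polynomials, so the broken derivative never leaves the pressure space; this is the discrete mechanism by which the hydrostatic restriction~\ref{eq:ISv} is captured by the scheme, and it is what will later let us trade the missing coercivity control of $\dzh\vh$ in~(\ref{eq:ah.partial.coercivity}) for a pressure bound.
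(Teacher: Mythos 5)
Your proposal is correct and rests on exactly the same idea as the paper's own (one-line) proof: since velocity and pressure use equal-degree spaces, $\dzh\vh\in \Pd h^k=\Ph$, so the supremum is attained at the test choice $\bph=\dzh\vh$. The Cauchy--Schwarz upper bound and the degenerate case $\dzh\vh=0$ that you spell out are implicit in the paper's argument, so your write-up is simply a more detailed version of the same proof.
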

\begin{proof}
  Given $\vh\in\Vh$, it suffices to note that the supremum of \eqref{eq:vh.stability} is
  reached for
  $\bph= \partial_{z,h} \vh \in \Ph \subset
  L^2(\Omega) $.
\end{proof}

\begin{remark}
  In general, $\partial_{z,h} \vh \not \in L_0^2(\Omega)$, thus previous
  result is not clear taking supreme on zero-mean discrete pressures $P_h\cap L_0^2(\Omega)$.
\end{remark}

At this point, well-posedness of the discrete problem hinges on a
bound of $\norm{\ph}$. The problem is that, for general (non
zero-mean) pressures, it cannot be obtained by the well-known discrete
inf-sup (or LBB) condition. For this reason, a specific DG Galerkin
inf-sup condition for bounding $\norm{p-\mean{p}}$ is now introduced
(where $\mean{p}$ denotes the mean of $p$ in $\Omega$).  Let us define the
following discrete bilinear form:
$$ b_h(\wwh, \ph )= -\int_{\Omega} \ph\, \gradh \cdot \wwh + \sum_{e\in\Eh} \int_e \jump{\wwh} \cdot \nn_e\, \average{\ph}$$
where $ \nabla_h \cdot$ is the ``broken'' divergence operator
(defined on each $K\in\Th$). It is not difficult
to show continuity for $b_h(\wwh,\ph)$ with $\normvel{\wwh}$ and $\norm{\ph}$ in $\WWh \times \Ph$.
The following property is also satisfied:
\begin{equation}
  \label{eq:b(.,1)=0}
  b_h(\wwh,1) = 0 \qquad \forall\,\wwh\in\WWh.
\end{equation}
Let us consider the following pressure seminorm in
$ H^1(\Th ) \supset \Pd h^k$:
$$
|p|_P  = \Big ( \sum_{e\in\Ehi} h_e \, \| \jump{p} \|_{L^2(e)}^2 \Big )^{1/2}.
$$

\begin{lemma} [Stability for $\ph$] There exists $\ConstISph > 0$ independent of $h$, such that
  \begin{equation}
    \ConstISph  \, \norm{\ph-\mean{\ph}}
    \le \sup_{\wwh \in \WWh \setminus \{0\}} \frac{b_h ( \wwh, \ph )}{\normvel{\wwh}}
    + | \ph |_P , \qquad \forall \ph \in \Ph.
    \label{eq:ph.stability}
  \end{equation}
  \label{lemma:ph.stability}
\end{lemma}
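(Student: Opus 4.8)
The inequality~\eqref{eq:ph.stability} is a discrete inf-sup (LBB-type) bound for the pressure, weakened by the defect $|\ph|_P$ that measures the interelement jumps. The plan is to follow the classical Fortin strategy, transferring the \emph{continuous} hydrostatic inf-sup condition~\ref{eq:ISp} to the discrete level by means of a stable, moment-preserving interpolation. First I would fix $\ph\in\Ph$ and set $q=\ph-\mean{\ph}\in L^2_0(\Omega)$. Since \ref{eq:ISp} is equivalent to surjectivity of the divergence $\nabla\cdot\colon\UU\times\V\to L^2_0(\Omega)$, I can pick $\mathbf{w}=(\uu,v)\in\UU\times\V$ with $\nabla\cdot\mathbf{w}=-q$ and $\|(\grad\uu,\dz v)\|\le\beta^{-1}\|q\|$, where $\beta$ is the continuous inf-sup constant. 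A crucial observation is that the hydrostatic boundary conditions built into $\UU$ and $\V$ force the normal component $\mathbf{w}\cdot\nn$ to vanish on \emph{all} of $\partial\Omega$ (on $\surfaceBoundary$ because $v=0$ and $\nn=\pm\mathbf{e}_z$; on $\bottomBoundary$ because $\mathbf{w}=0$; on $\talusBoundary$ because $\uu=0$ and $\nn$ is horizontal), which makes the boundary face terms disappear below.

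Next I would build $\wwh=\Pi_h\mathbf{w}\in\WWh$ through a Fortin-type operator enjoying, uniformly in $h$: (i) $\normvel{\cdot}$-stability, $\normvel{\Pi_h\mathbf{w}}\le C\|(\grad\uu,\dz v)\|\le C\|q\|$; (ii) the volume moment condition $\int_K(\Pi_h\mathbf{w}-\mathbf{w})\cdot\mathbf{r}=0$ for all $\mathbf{r}\in(\mathbb{P}^{k-1}(K))^d$ and $K\in\Th$; and (iii) the face approximation estimate $\sum_{e\in\Ehi}h_e^{-1}\|\mathbf{w}-\Pi_h\mathbf{w}\|_{L^2(e)}^2\le C\|(\grad\uu,\dz v)\|^2$. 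Starting from the discrete integration-by-parts identity
\begin{equation*}
  b_h(\wwh,\ph)=\int_\Omega\wwh\cdot\gradh\ph-\sum_{e\in\Ehi}\int_e\average{\wwh}\cdot\nn_e\jump{\ph},
\end{equation*}
valid for every $\wwh\in\WWh$ and obtained from the definition of $b_h$ by elementwise integration by parts, property (ii) gives $\int_\Omega\Pi_h\mathbf{w}\cdot\gradh\ph=\int_\Omega\mathbf{w}\cdot\gradh\ph$, since $\gradh\ph$ is elementwise in $(\mathbb{P}^{k-1}(K))^d$. Integrating this last integral by parts, using $\nabla\cdot\mathbf{w}=-q$, the zero mean of $q$, and the vanishing of the boundary contributions, I arrive at
\begin{equation*}
  b_h(\Pi_h\mathbf{w},\ph)=\|q\|^2+\sum_{e\in\Ehi}\int_e\bigl(\mathbf{w}-\average{\Pi_h\mathbf{w}}\bigr)\cdot\nn_e\,\jump{\ph}.
\end{equation*}

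It then remains to absorb the last sum. By Cauchy--Schwarz, the discrete trace inequality~\eqref{eq:trace.boundary.element}, and property (iii), this remainder is bounded by $C\|q\|\,|\ph|_P$, so that $b_h(\Pi_h\mathbf{w},\ph)\ge\|q\|^2-C\|q\|\,|\ph|_P$. Dividing by $\normvel{\Pi_h\mathbf{w}}\le C\|q\|$ and rearranging yields $\|q\|\le C'\sup_{\wwh\in\WWh\setminus\{0\}}b_h(\wwh,\ph)/\normvel{\wwh}+C''\,|\ph|_P$, which is exactly~\eqref{eq:ph.stability} after renaming the constants so that the coefficient of $|\ph|_P$ equals one and $\ConstISph$ absorbs the remaining factors.

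The main obstacle is the construction of the Fortin operator $\Pi_h$ meeting (i)--(iii) simultaneously and uniformly in $h$. The horizontal components are handled by a standard $\normsip{\cdot}$-stable, moment-preserving quasi-interpolant of an $H^1$ function; the genuine difficulty lies in the vertical component, because $\V$ controls only the broken vertical derivative $\dzh v$ together with the vertical jump seminorm $\seminormV{\cdot}$ and carries \emph{no} horizontal regularity. Consequently $\Pi_h$ must be designed in an anisotropy-aware, componentwise fashion adapted to the hydrostatic norm $\normvel{\cdot}$, rather than through an isotropic interpolant; securing both the moment condition~(ii) and the anisotropic stability~(i) for this vertical part is where the proof concentrates its technical effort.
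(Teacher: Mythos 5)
Your overall Fortin strategy is the classical way to prove discrete DG inf-sup conditions, and your algebra is sound: the discrete integration-by-parts identity for $b_h$, the use of the moment condition, and the vanishing of the boundary terms (thanks to $\mathbf{w}\cdot\nn_e=0$ on $\partial\Omega$) are all correct. But the proof has a genuine gap exactly where you locate the ``technical effort'': an anisotropic Fortin operator with your properties (i)--(iii) does not exist, so the argument cannot be completed as formulated. The culprit is your starting point, the \emph{hydrostatic} inf-sup condition~\ref{eq:ISp}, whose vertical velocity $v\in\V$ carries no horizontal regularity, while property (iii) demands a face approximation estimate on faces with $n_z\neq 0$ that forces approximation power in the horizontal variable. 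Concretely, on a structured mesh take $v(\xx,z)=g(\xx)\phi(z)$ with $\phi$ smooth and vanishing at the top and bottom, and $g\in L^2(S)$ oscillating at a scale much smaller than $h$; then $\norm{\dz v}=O(1)$, but on each layer of horizontal faces the $L^2(e)$-distance from the trace $g(\xx)\phi(z_j)$ to the \emph{whole} space of piecewise polynomial traces is bounded below, so
\begin{equation*}
  \sum_{e\in\Ehi} \frac{1}{h_e}\,\big\| (v-\Pi_h v)\,n_z \big\|_{L^2(e)}^2 \;\gtrsim\; h^{-2}
\end{equation*}
for \emph{every} possible choice of $\Pi_h v\in\Vh$. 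No operator, however cleverly anisotropy-aware, can satisfy (iii) uniformly in $h$.

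The gap is also unnecessary, because the anisotropy of $\normvel{\cdot}$ is an advantage here, not an obstacle; this is the observation you missed and the one the paper exploits. Since $\norm{\dzh\vh}\le\norm{\gradh\vh}$ and $\jump{\vh n_z}^2\le\jump{\vh}^2$, one has $\normvel{\wwh}\le\norm[\mathrm{iso}]{\wwh}$ for all $\wwh\in\WWh$, so the supremum with $\normvel{\cdot}$ in the denominator is \emph{larger} than the one with $\norm[\mathrm{iso}]{\cdot}$: the hydrostatic inf-sup~\eqref{eq:ph.stability} is a weaker statement than the standard isotropic DG inf-sup. The paper therefore simply applies the known isotropic result (\cite{di_pietro_ern_2012}, Lemma 6.10) to the zero-mean pressure $\ph-\mean{\ph}$, notes that $b_h(\wwh,\ph-\mean{\ph})=b_h(\wwh,\ph)$ by~\eqref{eq:b(.,1)=0} and that $|\ph-\mean{\ph}|_P=|\ph|_P$, and concludes in two lines. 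If you want a self-contained argument, run your Fortin construction starting from the \emph{isotropic} Stokes inf-sup (surjectivity of $\nabla\cdot$ from $H^1_0(\Omega)^d$ onto $L^2_0(\Omega)$), for which a standard isotropic quasi-interpolant does satisfy (i)--(iii); then pass from $\norm[\mathrm{iso}]{\cdot}$ to $\normvel{\cdot}$ by the norm comparison only at the very last step.
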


\begin{proof}
  Let $\ph\in\Ph$. It is known (see e.g. \cite{di_pietro_ern_2012},
  Lemma 6.10) that inequality (\ref{eq:ph.stability}) holds for
  zero-mean pressures if $\normvel{\wwh}$ is replaced by
  $\norm[\mathrm{iso}]{\wwh}$. On the other hand, for all
  $ \vh \in \Vh$, we have
  $ \| \partial_{z,h} \vh \| ^2 \le \| \grad _h \vh \|  ^2$
  and then $\normvel{\wwh} \le \norm[\mathrm{iso}]{\wwh}$ for all
  $\wwh \in \WWh$.  Therefore, exists $\ConstISph>0$ such that
  \begin{align*}
    \ConstISph \, \norm{\ph-\mean{\ph}}
    &\le \sup_{\wwh \in \WWh
      \setminus \{0\}} \frac{b_h(\wwh,
      \ph-\mean{\ph})}{\norm[\mathrm{iso}]{\wwh}} + | \ph-\mean{\ph}
    |_P
    \\
    &\le \sup_{\wwh \in \WWh \setminus \{0\}} \frac{b_h ( \wwh, \ph
    )}{\normvel{\wwh}}+ | \ph |_P.
\end{align*}
\end{proof}







To conclude this section, let us formulate the following DG
discretization of the Hydrostatic-Stokes
problem~(\ref{eq:HS.weak.a})--(\ref{eq:HS.weak.c}): find
$(\wwh, \ph) \in \WWh \times \Ph$ such that
\begin{equation}\label{disc-var-pb}
\left \{
\begin{array}{l}
  \displaystyle
  a_h ( \wwh, \bwwh) +b_h ( \bwwh, \ph ) = \int_{\Omega} \ff \cdot \buuh
  , \quad \forall \,\bwwh=(\buuh,\vh) \in \WWh,
  \\\noalign{\medskip}
  \displaystyle
  -b_h (\wwh, \bph) + s_h (\ph, \bph) + \pEpsilon \int_\Omega \ph\bph=0 \quad \forall\, \bph \in \Ph,
\end{array} \right.
\end{equation}
where the stabilization bilinear form
$$s_h (\ph, \bph)=  \sum_{e\in\Ehi} h_e \, \int_e
\jump{\ph} \, \jump{\bph}$$
is introduced to control the $L^2$-norm of $\ph-\mean{\ph}$ (by
Lemma~\ref{lemma:ph.stability}) and $\pEpsilon>0$ is a small
penalization parameter. Note that the choice $\pEpsilon=0$ leads to a
ill-posed system, due to the fact that
\begin{equation}
  \label{eq:s(.,1)=0}
  s_h(\ph,1)=0 \quad \forall \,\ph\in\Ph
\end{equation}
which, together with~(\ref{eq:b(.,1)=0}), means that if $\pEpsilon=0$ and
$(\wwh,\ph)\in\WWh\times\Ph$ is a solution to~(\ref{disc-var-pb}), then
$(\wwh,\ph+C)$ is also in $\WWh\times\Ph$ and it
solves~(\ref{disc-var-pb}).

\section{Well-Posedness of the Discrete Problem}
\label{sect:well-posedness}

The discrete formulation~(\ref{disc-var-pb}) can be rewritten in a
vectorial form as follows: find $(\wwh, \ph) \in \WWh \times \Ph$ such
that
\begin{equation}
  c_h \Big ( (\wwh,\ph), ( \bwwh, \bph) \Big )=\displaystyle\int _{\Omega} \ff \cdot \buuh
  , \qquad
  \forall (\bwwh, \bph) \in     \WWh \times \Ph,
  \label{eq:hydrostatic.reformulation}
\end{equation}
where
\begin{multline}
c_h \Big ( (\wwh,\ph), ( \bwwh, \bph) \Big )= a_h ( \wwh, \bwwh)
+b_h ( \bwwh, \ph )
\\
-b_h (\wwh, \bph) + s_h (\ph, \bph) +\pEpsilon\int_\Omega \ph \bph.
\label{eq:bilinear.c}
\end{multline}
We consider  the following norm in $\Xh =\WWh \times \Ph$:
\begin{multline*}
  \| (\wwh, \ph)\|_{\Xh} = \Big ( \normvel{{\bf w}_h}^2 + \| \ph
  \|^2 + |\ph|_P^2\Big ) ^{1/2}
  \\
  =
  \Big ( \normsip{\uuh}^2 + \| \partial_{z,h} v \|
  ^2 + \seminormV{v} ^2+ \| \ph \|^2 + |\ph|_P^2\Big )^{1/2}.
\end{multline*}
According to Banach-Necas-Babu\v{s}ka theorem (see
e.g.~\cite{Ern-Guermond:04}) well-posedness of discrete
problem~(\ref{disc-var-pb}) hinges on the following discrete stability
result for $c_h(\cdot,\cdot)$.
\begin{theorem}[Discrete inf-sup stability]
  \label{teor.well-posedness}
  Assume that the penalty parameter $\eta$ in $a_h^{sip,\eta}(\cdot,\cdot)$ is such
  that $ \eta > \eta_*$, with $\eta_*$ defined in
  Lemma~\ref{lemma:sip:coercivity}. Then, there is $\gamma >0$
  independent of $h$ and $\pEpsilon$ such that, for all
  $(\wwh, \ph) \in \Xh= \WWh \times \Ph$, one has
  \begin{multline}
    \label{eq:global.inf-sup}
    \sqrt{\pEpsilon}\,\norm{\ph-\mean\ph}
    + \gamma \,\| (\wwh, \ph-\mean{\ph})\|_{\Xh}
    \\
    \le \sup_{(\bwwh,\bph)\in \Xh \setminus
      \{0\}} \frac{c_h ( (\wwh, \ph), (\bwwh,\bph)  ) }{\norm[\Xh]{(\bwwh,\bph)} }
    +
    \pEpsilon|\Omega|{\mean{\ph}^2}.
  \end{multline}
\end{theorem}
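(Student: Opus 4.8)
The statement is the Banach--Ne\v{c}as--Babu\v{s}ka inf-sup hypothesis for $c_h$, so the plan is this: for each fixed $(\wwh,\ph)\in\Xh$ I will construct a single test pair $(\bwwh,\bph)\in\Xh$ whose norm is comparable to $\|(\wwh,\ph-\mean{\ph})\|_{\Xh}$ and for which $c_h((\wwh,\ph),(\bwwh,\bph))$ is bounded below by a multiple of $\|(\wwh,\ph-\mean{\ph})\|_{\Xh}^2$ plus the penalty contributions; dividing by $\|(\bwwh,\bph)\|_{\Xh}$ then produces the first-power bound \eqref{eq:global.inf-sup}. The test pair is assembled as a weighted sum of three pieces, each controlling one block of the norm.

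The diagonal piece is $(\wwh,\ph)$ itself. Because the two off-diagonal $b_h$ terms in \eqref{eq:bilinear.c} cancel, one has $c_h((\wwh,\ph),(\wwh,\ph))=a_h(\wwh,\wwh)+s_h(\ph,\ph)+\pEpsilon\|\ph\|^2$; invoking the partial coercivity \eqref{eq:ah.partial.coercivity} (valid since $\eta>\eta_*$, Lemma~\ref{lemma:sip:coercivity}) together with $s_h(\ph,\ph)=|\ph|_P^2$ and $\pEpsilon\|\ph\|^2=\pEpsilon\|\ph-\mean{\ph}\|^2+\pEpsilon|\Omega|\mean{\ph}^2$, this already controls $\normsip{\uuh}^2$, $\seminormV{\vh}^2$, $|\ph|_P^2$ and the penalized masses. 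What remains uncontrolled are precisely the two inf-sup directions $\|\dzh\vh\|$ and $\|\ph-\mean{\ph}\|$.

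To recover $\|\dzh\vh\|$ I test with $(0,\dzh\vh)$: the term $-b_h(\wwh,\dzh\vh)$ reproduces the full $\|\dzh\vh\|^2$ (the maximizer in Lemma~\ref{lemma:vh.stability}), leaving the remainders $\int_\Omega\dzh\vh\,\divx\uuh$ (bounded via $\|\divx\uuh\|\le\normsip{\uuh}$), $\sum_{e}\int_e\jump{\wwh}\cdot\nn_e\average{\dzh\vh}$, $s_h(\ph,\dzh\vh)$ and $\pEpsilon\int_\Omega\ph\,\dzh\vh$, each bounded through Cauchy--Schwarz and the technical estimates \eqref{eq:technical.average.bound}--\eqref{eq:technical.jump.bound} by $\big(\normsip{\uuh}+\seminormV{\vh}+|\ph|_P+\pEpsilon\|\ph\|\big)\|\dzh\vh\|$. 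To recover $\|\ph-\mean{\ph}\|$ I take a near-maximizer $\wwh_p$ of \eqref{eq:ph.stability} rescaled to $\normvel{\wwh_p}=\|\ph-\mean{\ph}\|$ (this rescaling keeps every estimate homogeneous of degree two) and test with $(\wwh_p,0)$: using \eqref{eq:b(.,1)=0} one gets $b_h(\wwh_p,\ph)=b_h(\wwh_p,\ph-\mean{\ph})\ge\ConstISph\|\ph-\mean{\ph}\|^2-\|\ph-\mean{\ph}\|\,|\ph|_P$, while $a_h(\wwh,\wwh_p)$ is controlled by the boundedness Lemma~\ref{lemma:sip:boundedness} by $C(\normsip{\uuh}+\seminormV{\vh})\|\ph-\mean{\ph}\|$.

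I then set $(\bwwh,\bph)=(\wwh,\ph)+\delta_2(0,\dzh\vh)+\delta_1(\wwh_p,0)$ and fix $\delta_2$, then $\delta_1$, small and ($h$- and $\pEpsilon$-)independent. Young's inequality absorbs every remainder above into the coercive quantities furnished by the diagonal, and since $\normvel{\wwh_p}=\|\ph-\mean{\ph}\|$ and $|\dzh\vh|_P\le C\|\dzh\vh\|$ by \eqref{eq:technical.jump.bound}, one has $\|(\bwwh,\bph)\|_{\Xh}\le C\,\|(\wwh,\ph-\mean{\ph})\|_{\Xh}$; dividing the resulting quadratic lower bound by $\|(\bwwh,\bph)\|_{\Xh}$ yields the $\gamma\,\|(\wwh,\ph-\mean{\ph})\|_{\Xh}$ term, the surviving penalty mass giving the additional $\sqrt{\pEpsilon}\,\|\ph-\mean{\ph}\|$. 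The one contribution that cannot be absorbed is the mean mode: since $\mean{\ph}$ lies in the kernel of $b_h$ and $s_h$ (by \eqref{eq:b(.,1)=0} and \eqref{eq:s(.,1)=0}), it is seen only by the penalty, and the coupling $\pEpsilon\,\mean{\ph}\int_\Omega\dzh\vh$ inside the vertical piece---nonzero precisely because $\dzh\vh\notin L_0^2(\Omega)$ (Remark after Lemma~\ref{lemma:vh.stability})---is, after a Young splitting against $\|\dzh\vh\|^2$, left over as the term $\pEpsilon|\Omega|\mean{\ph}^2$ on the right-hand side of \eqref{eq:global.inf-sup}. The main obstacle is exactly this combined absorption: recovering the two non-coercive directions while keeping all constants independent of $h$ and $\pEpsilon$ forces both the degree-two rescaling of $\wwh_p$ and the careful separation of the mean pressure, which is why that mode appears as an irreducible remainder rather than being controlled.
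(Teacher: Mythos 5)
Your overall strategy---building one composite test pair from the diagonal plus the two inf-sup (near-)maximizers of Lemma~\ref{lemma:vh.stability} and Lemma~\ref{lemma:ph.stability}, then absorbing cross terms by Young's inequality---is the classical saddle-point construction, and it rests on exactly the same pillars as the paper (the partial coercivity \eqref{eq:ah.partial.coercivity}, the two stability lemmas, boundedness, and Lemma~\ref{lemma.technical.inequalities}). The paper organizes these ingredients differently: it tests \emph{separately} with $(\wwh,\ph-\mean\ph)$, with $(\bwwh,0)$, and with $(0,\bph)$, obtaining the three scalar estimates \eqref{eq:partial-coercivity-ch}, \eqref{estimation-ph} and \eqref{S_4}, and only then combines them algebraically; that organization never requires comparing the norm of a composite test function with $\|(\wwh,\ph-\mean{\ph})\|_{\Xh}$.

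That comparison is precisely where your argument has a genuine gap. Your diagonal piece is $(\wwh,\ph)$, so the pressure slot of your test pair is $\bph=\ph+\delta_2\,\dzh\vh$, and $\norm{\bph}$ contains the mean mass $|\Omega|^{1/2}\,|\mean\ph|$. Hence the claim $\norm[\Xh]{(\bwwh,\bph)}\le C\,\|(\wwh,\ph-\mean{\ph})\|_{\Xh}$ is false: take $\wwh=0$ and $\ph$ equal to a large constant plus a small fixed zero-mean part; then $\|(\wwh,\ph-\mean{\ph})\|_{\Xh}$ stays bounded while $\norm[\Xh]{(\bwwh,\bph)}$ blows up. Since your final step divides the quadratic lower bound by $\norm[\Xh]{(\bwwh,\bph)}$, the chain of inequalities produces nothing in the regime where $|\mean\ph|$ dominates---and this regime cannot be excluded, because tolerating an arbitrary mean mode (damped only by $\pEpsilon$) is exactly what the theorem is about; indeed you yourself note that the mean is ``seen only by the penalty'', but it also sits inside your test function and inflates its norm. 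The repair is to take the diagonal piece $(\wwh,\ph-\mean\ph)$ instead of $(\wwh,\ph)$: by (\ref{eq:b(.,1)=0}) and (\ref{eq:s(.,1)=0}) the same cancellation holds,
$$
c_h\big((\wwh,\ph),(\wwh,\ph-\mean\ph)\big)=a_h(\wwh,\wwh)+s_h(\ph,\ph)+\pEpsilon\norm{\ph-\mean\ph}^2,
$$
and now the composite test pressure $(\ph-\mean\ph)+\delta_2\,\dzh\vh$ does satisfy the needed norm bound, using \eqref{eq:technical.jump.bound} to control $|\dzh\vh|_P$ by $\norm{\dzh\vh}$. This is exactly the test pressure the paper uses. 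With that single change your absorption scheme closes, and the leftover mean coupling $\pEpsilon\,\mean\ph\int_\Omega\dzh\vh$ is handled just as the paper handles its term $S_4$ in Step 2, producing the additive $\pEpsilon|\Omega|\mean{\ph}^2$ on the right-hand side of \eqref{eq:global.inf-sup}.
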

\begin{corollary}
If  $(\wwh, \ph) \in \Xh= \WWh \times \Ph$ is a solution of scheme \eqref{disc-var-pb}, in particular $\mean{\ph}=0$ and then \eqref{eq:global.inf-sup} implies
$$
\gamma \,\| (\wwh, \ph)\|_{\Xh}
    \le \sup_{(\bwwh,\bph)\in \Xh \setminus
      \{0\}} \frac{c_h ( (\wwh, \ph), (\bwwh,\bph)  ) }{\norm[\Xh]{(\bwwh,\bph)} }.
$$
Consequently, scheme \eqref{disc-var-pb} is well-posed.
\end{corollary}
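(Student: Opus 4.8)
The plan is to deduce the Corollary from Theorem~\ref{teor.well-posedness} in two short stages. First I would show that any solution of the scheme~\eqref{disc-var-pb} automatically has zero-mean pressure; this is exactly what kills the spurious term $\pEpsilon|\Omega|\mean{\ph}^2$ sitting on the right-hand side of the general estimate~\eqref{eq:global.inf-sup}. Once that term is gone, the estimate collapses to the displayed clean bound, and well-posedness follows from it because $\Xh$ is finite-dimensional.

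For the first stage I would test the second equation of~\eqref{disc-var-pb} against the constant function $\bph=1\in\Ph$. By the two null-space identities~\eqref{eq:b(.,1)=0} and~\eqref{eq:s(.,1)=0}, the contributions $b_h(\wwh,1)$ and $s_h(\ph,1)$ both vanish, leaving only $\pEpsilon\int_\Omega\ph=0$. Since $\pEpsilon>0$, this forces $\int_\Omega\ph=0$, that is $\mean{\ph}=0$, as asserted in the Corollary. Substituting $\mean{\ph}=0$ into~\eqref{eq:global.inf-sup} then makes the added term $\pEpsilon|\Omega|\mean{\ph}^2$ vanish and replaces $\ph-\mean{\ph}$ by $\ph$ throughout; discarding the nonnegative contribution $\sqrt{\pEpsilon}\,\norm{\ph}$ on the left yields precisely $\gamma\,\| (\wwh,\ph)\|_{\Xh}\le \sup_{(\bwwh,\bph)}c_h((\wwh,\ph),(\bwwh,\bph))/\norm[\Xh]{(\bwwh,\bph)}$.

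For the final well-posedness claim I would invoke the Banach-Necas-Babu\v{s}ka theorem in its finite-dimensional form: since the reformulation~\eqref{eq:hydrostatic.reformulation} is a square linear system posed on the finite-dimensional space $\Xh$, solvability for every right-hand side is equivalent to uniqueness, so it suffices to rule out nontrivial solutions of the homogeneous problem. If $(\wwh,\ph)$ satisfies $c_h((\wwh,\ph),(\bwwh,\bph))=0$ for all $(\bwwh,\bph)\in\Xh$, then it solves~\eqref{disc-var-pb} with $\ff=0$; by the first stage $\mean{\ph}=0$, so the displayed inequality applies, and its right-hand supremum is identically zero. Hence $\| (\wwh,\ph)\|_{\Xh}=0$, giving $(\wwh,\ph)=0$ and therefore existence and uniqueness for arbitrary $\ff$.

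The step I expect to be the conceptual obstacle (though computationally trivial) is recognizing \emph{why} the bare estimate~\eqref{eq:global.inf-sup} cannot be plugged straight into Banach-Necas-Babu\v{s}ka: its right-hand side carries the mean-dependent term $\pEpsilon|\Omega|\mean{\ph}^2$, so it is not a genuine inf-sup bound over all of $\Xh$. The constant pressure mode is controlled only weakly, through the $\pEpsilon$-penalization, and the test with $\bph=1$ is exactly what pins it down; restricting to solutions (where $\mean{\ph}=0$) is what removes the ambiguity. Everything else reduces to routine substitution into the already-established estimate.
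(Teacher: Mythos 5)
Your proof is correct and follows essentially the same route the paper leaves implicit: testing the second equation of \eqref{disc-var-pb} with $\bph=1$ and using \eqref{eq:b(.,1)=0} and \eqref{eq:s(.,1)=0} to force $\mean{\ph}=0$, then specializing \eqref{eq:global.inf-sup} (the term $\pEpsilon|\Omega|\mean{\ph}^2$ vanishes and the nonnegative term $\sqrt{\pEpsilon}\,\norm{\ph}$ is discarded) and concluding via the Banach--Necas--Babu\v{s}ka theorem cited before Theorem~\ref{teor.well-posedness}. Your filled-in details, in particular the finite-dimensional uniqueness-implies-existence step and the observation that the bare estimate \eqref{eq:global.inf-sup} is not by itself a genuine inf-sup bound over $\Xh$, are exactly the reasoning the paper intends but does not spell out.
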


\begin{proof}[Proof of Theorem \ref{teor.well-posedness}]
  Let $(\wwh, \ph) \in \Xh$,  let $S(\wwh, \ph)$
  denote the supreme on the right hand side of
  (\ref{eq:global.inf-sup}) and let us introduce the following notation:
  $\Phi \lesssim \Psi$ if $\Phi \le C \, \Psi$ for some constant
  $C>0$ independent of $h$.  Owing to
  (\ref{eq:ah.partial.coercivity}) and also to (\ref{eq:b(.,1)=0}) and~(\ref{eq:s(.,1)=0}),
  \begin{multline*}
    c_h ( (\wwh, \ph), (\wwh,\ph-\mean\ph)  )= a_h ( \wwh, \wwh)+s_h (\ph,\ph)
    +\pEpsilon \int_\Omega\ph (\ph-\mean\ph)
    \\
    \gtrsim
    \normsip{\uuh}^2
    + \seminormV{\vh}^2
    + |\ph|_P^2
    + \pEpsilon\norm{\ph-\mean\ph}^2,
  \end{multline*}
  where we applied the following property: $\int_\Omega \ph (\ph-\mean\ph)=\norm{\ph-\mean\ph}^2$.
  Therefore
  \begin{equation}
    \label{eq:partial-coercivity-ch}
     \normsip{\uuh}^2 + \seminormV{\vh}^2 + |\ph|_P^2
     + \pEpsilon\norm{\ph-\mean\ph}^2
     \lesssim S(\wwh, \ph) \norm[\Xh]{(\wwh,\ph-\mean\ph)}.
  \end{equation}

  The rest of the proof is divided into 3 steps:

  \begin{enumerate}
  \item[\em 1)] to estimate $\| \ph-\mean\ph \|$ uniformly on
    $\pEpsilon$,
  \item[\em 2)] to estimate $\| \partial_{z,h} v\| $, and
  \item[\em 3)] to collect estimates and apply Young's inequality.
  \end{enumerate}

  \noindent{\em Step 1: Estimate of $\|\ph-\mean\ph\| $.}
  %
  It can be obtained arguing as in the Stokes framework. Specifically,
  definition of $c_h(\cdot,\cdot)$ means that, for all
  $ \bwwh\in \WWh$,
  $$
  b_h (\bwwh, \ph)= c_h((\wwh,\ph), (\bwwh,0))-a_h(\wwh,\bwwh),
  $$
  then inf-sup condition~(\ref{eq:ph.stability}) imply
  \begin{align*}
    \ConstISph \, \| \ph-\mean\ph \|
    &\le \displaystyle \sup_{\bwwh
      \in \WWh}\Big (\frac{-a_h(\wwh, \bwwh)}{\normvel{ \bwwh} }+
    \frac{c_h((\wwh,\ph), (\bwwh,0))}{\| (\bwwh,0)\|_{\Xh}} \Big ) + |
    \ph |_P,
    \\
    &\le \displaystyle \sup_{\bwwh \in \wwh}\frac{-a_h(\wwh,
      \bwwh)}{\normvel{\bwwh}}+S(\wwh, \ph)+ |\ph|_P.
  \end{align*}
  Boundedness of $a_h(\cdot,\cdot)$ for $\normvel{\cdot}$, follows
  from Lemma~\ref{lemma:sip:boundedness}, namely
  \begin{align*}
  a_h(\wwh,\bwwh) &\lesssim \sum_{i=1}^{d-1}
  \normsip{u_i}\normsip{\overline u_i} + \eta\sum_{e\in\Eh} \frac1{h_e}
  \int_e \jump{\vv_h\nz}\jump{\bv_h\nz}
  \\
    &\lesssim
      \normsip{\uu_h}\normsip{\buu_h}
      + |v_h|_V |\bv_h|_V,
  \end{align*}
  so that
  \begin{equation}\label{estimation-ph}
    \|\ph-\mean\ph\| \lesssim  \normsip{\uu_h}
+ |v_h|_V+S(\wwh, \ph) + |\ph |_P.
  \end{equation}
  Note that former bound depends on $\normsip{\uu_h} + |v_h|_V$ and not
  on $\norm{\dzh v}$, what now allows bounding $\dzh v$ in
  terms of pressure.

  \bigskip
  \noindent{\em Step 2: Estimate of $ \| \partial_{z,h} v\| $:}
  %
  Definition of $c_h(\cdot,\cdot)$ yields, for all $\bph \in\Ph$,
  $$
  -b_h (\wwh, \bph) =  c_h ( (\wwh, \ph), (0,\bph)  )-s_h(\ph,\bph)-\pEpsilon\int_\Omega \ph \bph.
  $$
  Therefore, from the definition of $b_h(\cdot,\cdot)$:
  \begin{multline*}
    \int_{\Omega} \bph \, \dzh \vh= - \int_{\Omega} \bph\,\nabla_{{\bf x},h} \cdot \uuh
    + \sum _{e\in \Eh} \int_e \jump{\wwh}\cdot n_e \, \average{\bph} +
    \\
    c_h((\wwh,\ph), (0, \bph)) - s_h(\ph,\bph) -\pEpsilon\int_\Omega \ph \bph.
  \end{multline*}
  And inf-sup condition~(\ref{eq:vh.stability}) imply
  $$ \| \dzh \vh\| \le S_1+S_2+S(\wwh,\ph)+S_3 + S_4$$
  where we define
  \begin{gather*}
    S_1= \sup_{\bph \in \Ph} \frac{\int_{\Omega} \bph \, \nabla_{{\bf
          x},h}\cdot \uuh}{\| \bph \|},
    \qquad
    S_2 =\sup_{\bph \in
      \Ph} \frac{\sum _{e\in \Eh}\int_e \jump{\wwh}\cdot \nn_e \,
      \average{\bph}}{\| \bph \|},
    \\\noalign{\medskip}
    S_3=\sup_{\bph \in \Ph} \frac{s_h(\ph,\bph)}{\| \bph \|},
    \qquad
    S_4=\sup_{\bph \in \Ph} \frac{\pEpsilon\int_\Omega \ph\bph}{\| \bph \|} =
    \pEpsilon \| \ph \|.
  \end{gather*}

  For $S_1$, it is easy to see that
    $$
    S_1= \| \nabla_{{\bf x},h} \cdot \uuh \|
    \lesssim \| \nabla_{{\bf x},h} \uuh \|.
    $$

  For $S_2$,
    applying Cauchy-Schwarz inequality:
    $$ \sum_{e\in \Eh} \int_e \jump{\wwh} \cdot n_e \, \average{\bph} \le \Big
    ( \sum_{e\in \Eh} \frac{1}{h_e} \int_e |\jump{\wwh}\cdot n_e|^2 \Big ) ^{1/2}\,
    \Big (\sum_{e\in \Eh} h_e \int_e \average{\bph}^2 \Big )^{1/2}:=
    I_1\cdot I_2.
    $$
    One has:
    \begin{align*}
      I_1 &\le \left( \sum_{e\in \Eh} \frac{1}{h_e} \int_e \Big(
      \sum_{i=1}^{d-1} \jump{u_i}^2 +(\jump{\vh}\nz)^2\Big) \right) ^{1/2}
      \\
      &\lesssim
      \left( \sum_{i=1}^{d-1}\sum_{e\in \Eh} \frac{1}{h_e} \|
      \jump{u_i}\|_{L^2(e)}^2 + \sum_{e\in \Eh}\frac{1}{h_e} \|
      \jump{\vh}\nz\|_{L^2(e)}^2 \right) ^{1/2}
       = \left(\sum_{i=1}^{d-1} \seminormU{u_i}^2 + \seminormV{\vh}^2 \right)^{1/2}.
    \end{align*}
    On the other hand, using~(\ref{eq:technical.average.bound}) one has
    $
    I_2 \lesssim \norm{\bph}.
    $
    Therefore
    $$S_2 \lesssim \Big ( \sum_{i=1}^{d-1} \seminormU{u_i}^2 + \seminormV{\vh}^2
    \Big ) ^{1/2}. 
    $$
    To bound $S_3$, we apply~(\ref{eq:s(.,1)=0}), then Cauchy-Schwarz and
    inequality~(\ref{eq:technical.jump.bound}):
  \begin{multline*}
    s_h (\ph, \bph) = \displaystyle\sum_{e \in
      \Ehi} h_e \int_e \jump{\ph-\mean\ph} \jump{\bph}
    \\
    \le \Big ( \sum_{e \in \Ehi}
    h_e \int_e \jump{\ph-\mean\ph}^2 \Big )^{1/2}\,\Big ( \sum_{e \in\Ehi} h_e
    \int_e \jump{\bph}^2 \Big)^{1/2}
    \\
    \lesssim
    \|\ph-\mean\ph\| \,\|\bph\|.
  \end{multline*}
  Therefore
    $$ S_3 \lesssim \|\ph-\mean\ph\|.$$
    Finally,
    $$
    S_4= \pEpsilon \norm{\ph}
    \le \pEpsilon\norm{\ph-\mean\ph} + \pEpsilon\norm{\mean\ph}
    = \pEpsilon\norm{\ph-\mean\ph} + \pEpsilon|\Omega|\mean\ph^2.
    $$
   Summarizing:
   \begin{multline*}
     \norm{\dzh\vh}\lesssim \| \nabla_{{\bf x},h} \uuh \| +
     \Big ( \sum_{i=1}^{d-1} \seminormU{u_i}^2 + \seminormV{\vh}^2
     \Big ) ^{1/2}
     + S(\wwh, \ph)
     \\
     + \norm{\ph-\mean\ph}
     + \pEpsilon|\Omega|\mean\ph^2,
   \end{multline*}
that is
\begin{equation} \label{S_4}
 \norm{\dzh\vh}\lesssim
 \normsip{\uu_h} + \seminormV{\vh}    + S(\wwh, \ph)
     + \norm{\ph-\mean\ph}
     + \pEpsilon|\Omega|\mean\ph^2
\end{equation}

  \noindent{\em Step 3:}

  Taking into account~(\ref{eq:partial-coercivity-ch})
  and the above estimates \eqref{estimation-ph} and \eqref{S_4}, one has
  $$
  \| (\wwh, \ph-\mean\ph) \|_{\Xh}^2 \lesssim A^{\uu, v, p} + B^p + C^{\dz v},$$
  where $A^{\uu, v, p} , \ B^p $ and $ C^{\dz v},$ are defined and bounded as
  follows:
  \begin{align*}
    A^{\uu, v, p} &= \normsip{\uuh}^2 +
                    \seminormV{\vh}^2 +|\ph|_p^2
                    + \pEpsilon\,\norm{\ph-\mean\ph}^2 \lesssim
                    \, S(\wwh, \ph)\, \|(\wwh, \ph-\mean\ph)\|_{\Xh}.
    \\ \noalign{\medskip}
    B^p & =\| \ph - \mean\ph\|_{L^2}^2
          \lesssim \Big (\normsip{\uu_h}+ |v_h|_V +S(\wwh,\ph)+ |\ph |_P  \Big )^2
    \\
    & \qquad \lesssim
          S(\wwh, \ph) \, \|(\wwh, \ph-\mean\ph)\|_{\Xh} +S(\wwh, \ph)^2.
    \\\noalign{\medskip}
    C^{\dz v} &= \| \partial_{z,h} v \|_{L^2} ^2 \lesssim
                \big(
                \normsip{\uuh} + \seminormV{\vh}
                + \| \ph-\mean\ph \|_{L^2}
                + S(\wwh, \ph)
                + \pEpsilon|\Omega|\mean\ph^2
                \big)^2
    \\
    & \qquad \lesssim S(\wwh, \ph) \| (\wwh, \ph-\mean\ph)\|_{\Xh} +S(\wwh, \ph)^2
    + \big(\pEpsilon|\Omega|\mean\ph^2 \big)^2.
  \end{align*}
  Therefore,
  $$
  \| (\wwh, \ph-\mean\ph)\|_{\Xh}^2
  \lesssim S(\wwh, \ph)\,\| (\wwh, \ph-\mean\ph)\|_{\Xh} +S(\wwh, \ph)^2
  + \big(\pEpsilon|\Omega| \mean\ph^2 \big)^2
  $$
  and the conclusion follows from Young's inequality.
\end{proof}

\section{Numerical Tests}       %
\label{sect:numerical-tests}
We have developed some qualitative numerical tests which are agree
with previous theoretical results. Specifically, we were able to
program a standard lid driven cavity test for the discrete
formulation~\eqref{disc-var-pb} using FreeFem++~\cite{HechtFreeFem++},
a high level PDE language and solver which makes simple to develop
variational formulations. In the first test, we used discontinuous
$\P1/\P1$ for velocity and pressure and introduced the following
parameters: $\Omega = (0,1)^2\subset\Rset^2$, unstructured mesh with
$h\approx 1/30$, horizontal viscosity $\nu=1$, SIP penalization
$\eta=10^2$, pressure penalization $\delta_p=10^{-12}$.

Te following Dirichlet boundary are defined. On surface,
$\surfaceBoundary=\{(x,1), x\in(0,1)\}$: $u(x,y)=x(1-x)$, $v=0$. On on
bottom, $\bottomBoundary=\{(x,0), x\in(0,1)\}$: $u=0$, $v=0$. And on
sidewalls,
$\talusBoundary=\{(x,y)\in\Rset^2, x\in \{0,1\}, y\in(0,1)\}$:
$u=0$. We introduce homogeneous Neumann boundary condition for $v$
on $\talusBoundary$: $\dz v=0$.

Former boundary conditions are fixed weakly.  Specifically, the SIP
bilinear form~(\ref{eq:asip}), utilized in~(\ref{eq:bilinear.a}) for
horizontal components of velocity, and also the jump bilinear term
introduced for $\vv_h$ in~(\ref{eq:bilinear.a}), are modified as
follows: for each term regarding to a Dirichlet boundary edge, a
corresponding term is introduced in the right hand side linear
form. As all boundary conditions are zero except $u|_\surfaceBoundary$,
the only additional terms correspond to:
$$
\sum_{e\in\Ehb\cap\surfaceBoundary} \int_{e}
x(1-x)\, \big(
\grad\buh\cdot\mathbf{n}
+ \frac\eta{h_e}\buh
 - \mathbf{n}_x\bph
\big)ds.
$$
This expression can be simplified even more, considering that
$\textbf{n}_x=0$ on $\surfaceBoundary$.
On the other hand, terms related to Neumann boundary edges are
eliminated in~(\ref{eq:bilinear.a}) and a corresponding term is
introduced in the RHS as usual in Neumann boundary conditions. In our
case, we have only a null Neumann condition for $v|_\talusBoundary$.

Resulting velocity field and pressure
iso-values (figure~\ref{fig:P1P1cavity}) reproduce the expected
behavior: velocity recirculation and hydrostatic (vertical) pressure
iso-values. These results are improved for higher polynomial order
approximation, specifically for discontinuous $\P2/\P2$ velocity
approximation (figure~\ref{fig:P2P2cavity}). In this case, a higher
SIP penalization parameter, $\eta=10^4$, must be introduced.



\begin{figure}
  \begin{center}
    \begin{tabular}{@{}cc@{}}
      \includegraphics[width=0.35\linewidth, height=0.37\linewidth]{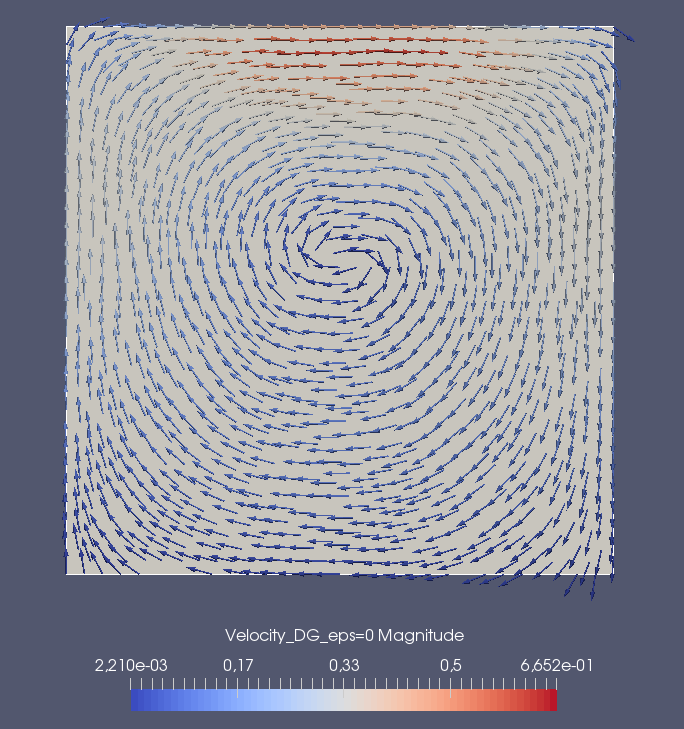}
      &
        \includegraphics[width=0.35\linewidth,height=0.37\linewidth]{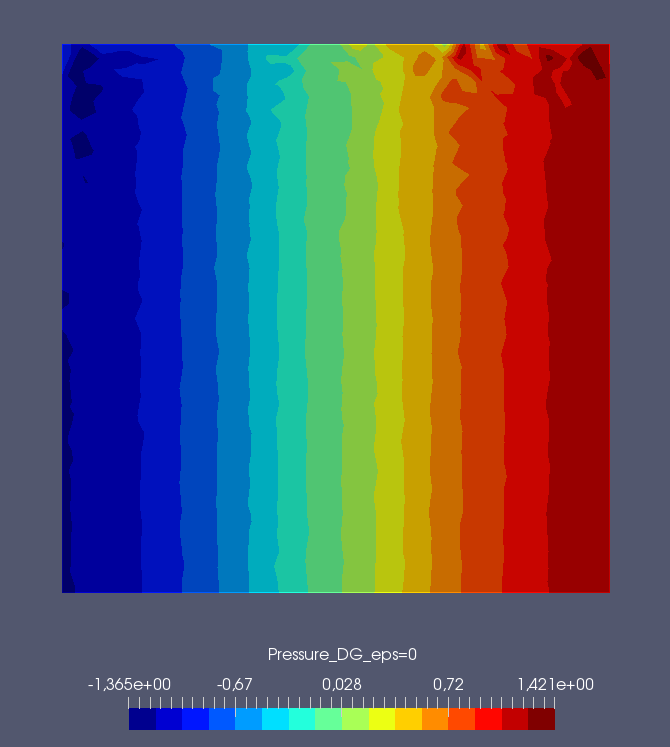}
      \\
      Velocity field.
      &
        Pressure iso-values.
    \end{tabular}
  \end{center}
  \caption{Cavity test, $\P1/\P1$ SIP DG}
  \label{fig:P1P1cavity}
\end{figure}

\begin{figure}
  \begin{center}
    \begin{tabular}{@{}cc@{}}
      \includegraphics[width=0.35\linewidth, height=0.37\linewidth]{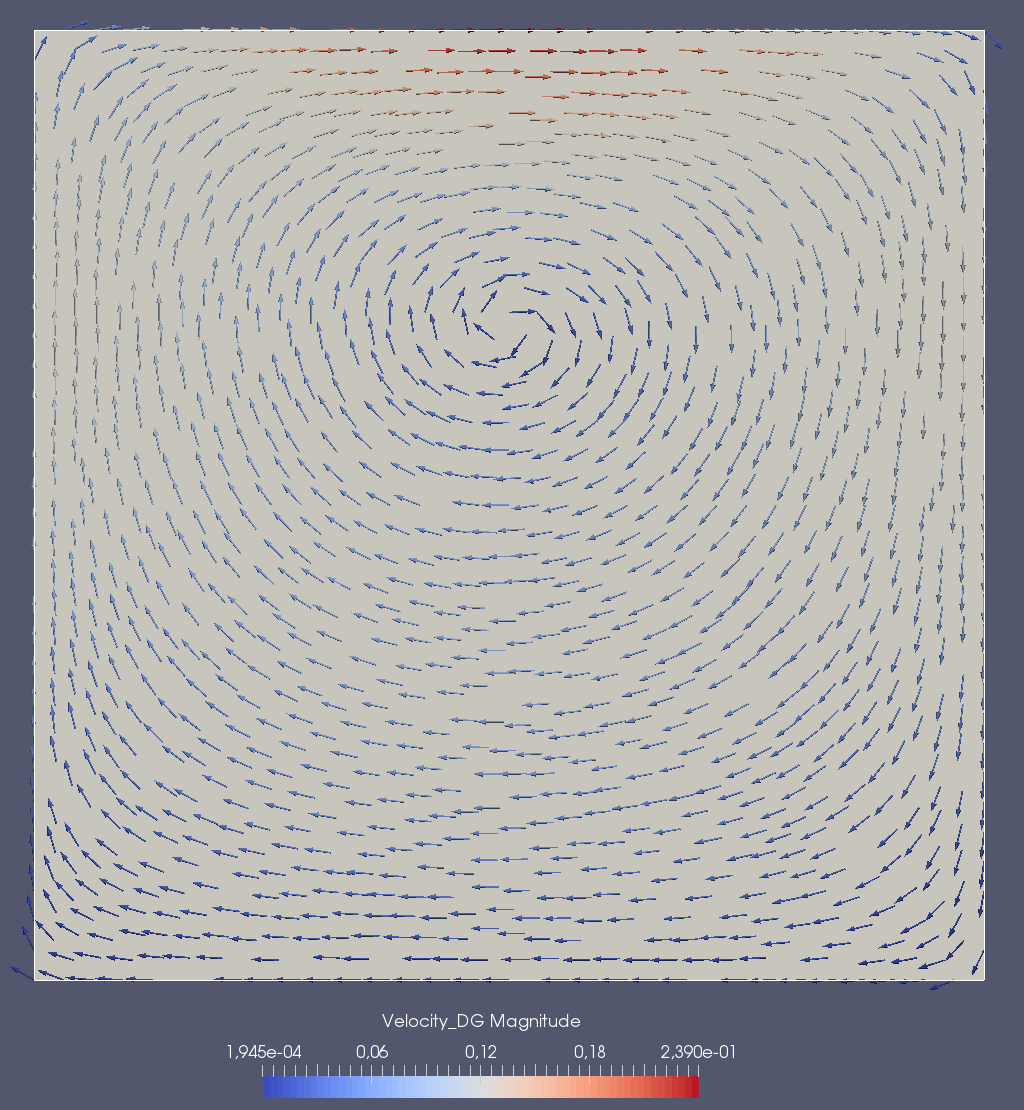}
      &
        \includegraphics[width=0.35\linewidth,height=0.37\linewidth]{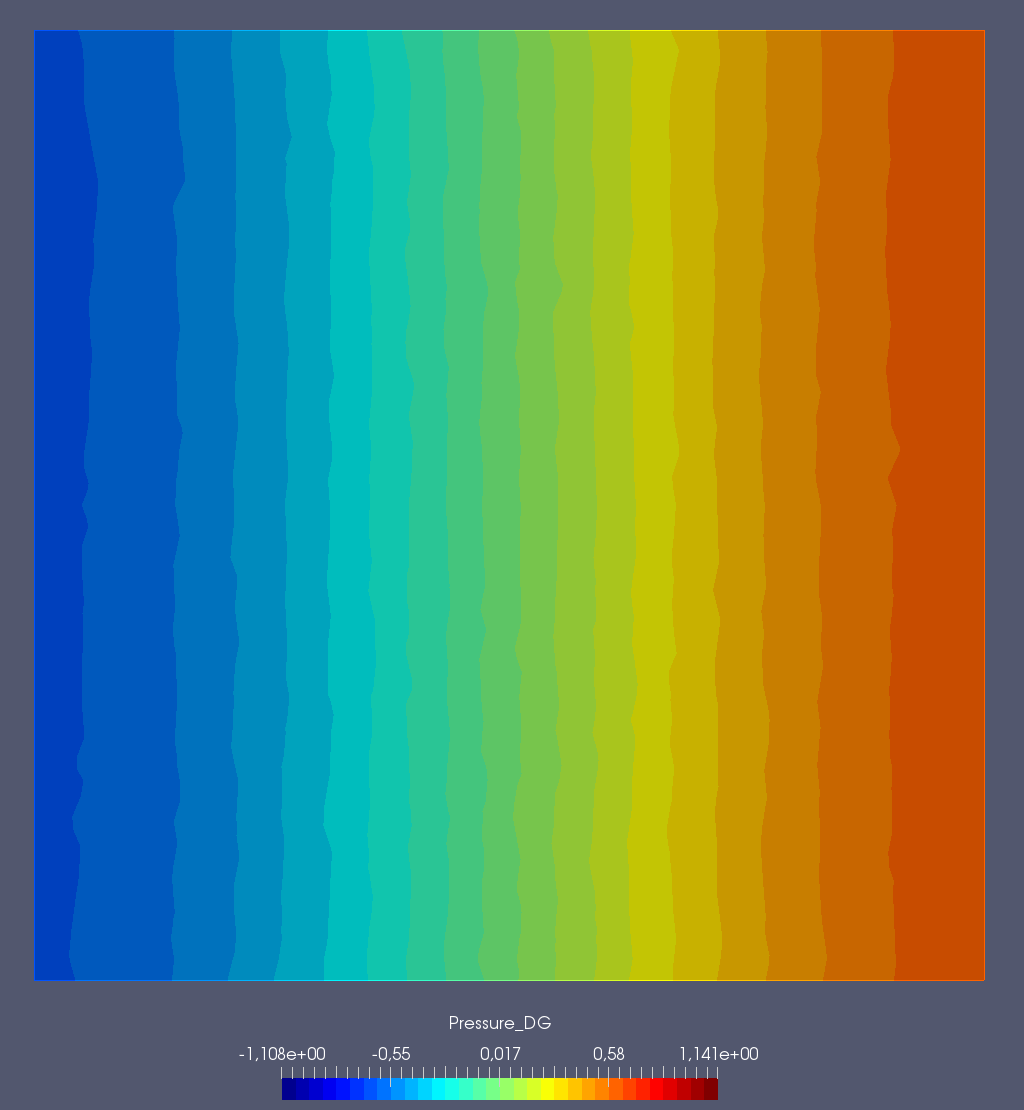}
      \\
      Velocity field.
      &
        Pressure iso-values.
    \end{tabular}
  \end{center}
  \caption{Cavity test, $\P2/\P2$ SIP DG}
  \label{fig:P2P2cavity}
\end{figure}
\section*{Acknowledgements }
The first author has been partially financed by the MINECO grant
MTM2015-69875-P (Spain) with the participation of FEDER.  The second
and third authors are also partially supported by the research group
FQM-315 of Junta de Andalucía.


\bibliography{biblio}
\end{document}